\newtheorem{thm}{Theorem}[section]
\newtheorem{corollary}[thm]{Corollary}
\newtheorem{lemma}[thm]{Lemma}
\newtheorem{claim}{Claim}
\theoremstyle{definition}
\newtheorem{definition}[thm]{Definition}
\newtheorem{example}[thm]{Example}
\theoremstyle{remark}
\newtheorem{remark}[thm]{Remark}
\newcommand{\field}[1]{\mathbb{#1}}
\newcommand{\Q }{\field{Q}}
\newcommand{\Z }{\field{Z}}
\DeclareMathOperator{\qf}{qf}
\DeclareMathOperator{\td}{t.d.}
\DeclareMathOperator{\Spec}{Spec}
\DeclareMathOperator{\m}{\frak{m}}
\DeclareMathOperator{\edim}{embdim}
\DeclareMathOperator{\charac}{char}
\DeclareMathOperator{\Sup}{Sup}
\DeclareMathOperator{\Tor}{Tor}
\begin{document}
%%%%%%%%%%%%%%%%%%%%%%%%%%%%%%%%%%%%%%%%%%%%%%%%%%%%%%%%%%%%%%%%%%%%%%%%%%%%%%%%%%%%%%%%%%%%%%%%%%%%%%%%%%%%%%%%%%%%%%%%%%%%%%%%%%%%%%%%%%%%%%
%%%%%%%%%%%%%%%%%%%%%%%%%%%%%%%%%%%%%%%%%%%%%%%%%%%%%%%%%%%%%%%%%%%%%%%%%%%%%%%%%%%%%%%%%%%%%%%%%%%%%%%%%%%%%%%%%%%%%%%%%%%%%%%%%%%%%%%%%%%%%%
%%%%%%%%%%%%%%%%%%%%%%%%%%%%%%%%%%%%%%%%%%%%%%%%%%%%%%%%%%%%%%%%%%%%%%%%%%%%%%%%%%%%%%%%%%%%%%%%%%%%%%%%%%%%%%%%%%%%%%%%%%%%%%%%%%%%%%%%%%%%%%
%%%%%%%%%%%%%%%%%%%%%%%%%%%%%%%%%%%%%%%%%%%%%%%%%%%%%%%%%%%%%%%%%%%%%%%%%%%%%%%%%%%%%%%%%%%%%%%%%%%%%%%%%%%%%%%%%%%%%%%%%%%%%%%%%%%%%%%%%%%%%%

\title[Regularity of tensor products of $k$-algebras]{Regularity of tensor products of $k$-algebras $^{(\star)}$}
\thanks{$^{(\star)}$ This work was supported by KFUPM under DSR Research Grant \# FT100021.}

\author{S. Bouchiba}
\address{Department of Mathematics, University of Meknes, Meknes 50000, Morocco}
\email{bouchiba@fs-umi.ac.ma}

\author[S. Kabbaj]{S. Kabbaj $^{(1)}$}\thanks{$^{(1)}$ Corresponding author.}
\address{Department of Mathematics and Statistics, KFUPM, Dhahran 31261, KSA}
\email{kabbaj@kfupm.edu.sa}

\date{\today}

\subjclass[2000]{13H05, 13H10, 13F15, 14M05, 16E65}

\keywords{Tensor product of $k$-algebras, Regular ring, Complete intersection ring,
Gorenstein ring, Cohen-Macaulay ring, Noetherian ring, separable extension, purely inseparable extension, Galois extension}
%\dedicatory{}

%%%%%%%%%%%%%%%%%%%%%%%%%%%%%%%%%%%%%%%%%%%%%%%%%%%%%%%%%%%%%%%%%%%%%%%%%%%%%%%%%%%%%%%%%%%%%%%%%%%%%%%%%%%%%%%%%%%%%%%%%%%%%%%%%%%%%%%%%%%%%%
%%%%%%%%%%%%%%%%%%%%%%%%%%%%%%%%%%%%%%%%%%%%%%%%%%%%%%%%%%%%%%%%%%%%%%%%%%%%%%%%%%%%%%%%%%%%%%%%%%%%%%%%%%%%%%%%%%%%%%%%%%%%%%%%%%%%%%%%%%%%%%
%%%%%%%%%%%%%%%%%%%%%%%%%%%%%%%%%%%%%%%%%%%%%%%%%%%%%%%%%%%%%%%%%%%%%%%%%%%%%%%%%%%%%%%%%%%%%%%%%%%%%%%%%%%%%%%%%%%%%%%%%%%%%%%%%%%%%%%%%%%%%%
%%%%%%%%%%%%%%%%%%%%%%%%%%%%%%%%%%%%%%%%%%%%%%%%%%%%%%%%%%%%%%%%%%%%%%%%%%%%%%%%%%%%%%%%%%%%%%%%%%%%%%%%%%%%%%%%%%%%%%%%%%%%%%%%%%%%%%%%%%%%%%
\begin{abstract}
This paper tackles a problem on the possible transfer of regularity to tensor products of algebras over a field $k$. The main result establishes necessary and sufficient conditions for a Noetherian tensor product of two extension fields of $k$ to inherit regularity in various settings of separability. Thereby, we provide some applications as well as several original examples to illustrate or delimit the scope of the established results.
\end{abstract}
\maketitle

%%%%%%%%%%%%%%%%%%%%%%%%%%%%%%%%%%%%%%%%%%%%%%%%%%%%%%%%%%%%%%%%%%%%%%%%%%%%%%%%%%%%%%%%%%%%%%%%%%%%%%%%%%%%%%%%%%%%%%%%%%%%%%%%%%%%%%%%%%%%%%
%%%%%%%%%%%%%%%%%%%%%%%%%%%%%%%%%%%%%%%%%%%%%%%%%%%%%%%%%%%%%%%%%%%%%%%%%%%%%%%%%%%%%%%%%%%%%%%%%%%%%%%%%%%%%%%%%%%%%%%%%%%%%%%%%%%%%%%%%%%%%%
%%%%%%%%%%%%%%%%%%%%%%%%%%%%%%%%%%%%%%%%%%%%%%%%%%%%%%%%%%%%%%%%%%%%%%%%%%%%%%%%%%%%%%%%%%%%%%%%%%%%%%%%%%%%%%%%%%%%%%%%%%%%%%%%%%%%%%%%%%%%%%
%%%%%%%%%%%%%%%%%%%%%%%%%%%%%%%%%%%%%%%%%%%%%%%%%%%%%%%%%%%%%%%%%%%%%%%%%%%%%%%%%%%%%%%%%%%%%%%%%%%%%%%%%%%%%%%%%%%%%%%%%%%%%%%%%%%%%%%%%%%%%%
\begin{section}{Introduction}

\noindent All algebras considered are commutative with identity elements and, unless otherwise specified, are assumed to be non-trivial. All ring homomorphisms are unital. Throughout, $k$ stands for a field. A Noetherian local ring $(R, \m)$ is regular if its Krull and embedding dimensions coincide; i.e., $\dim(R) = \edim(R)$, where $\edim(R)$ denotes the dimension of $\frac{\m}{\m^{2}}$ as an $\frac{R}{\m}$-vector space. Regular local rings were first introduced by Krull, and then became prominent once Zariski showed that, geometrically, a regular local ring corresponds to a smooth point on an algebraic variety. Later, Serre found a homological characterization for a local ring $R$ to be regular; that is, $R$ has finite global dimension. Finite global dimension is preserved under localization, so that localizations of regular local rings at prime ideals are again regular. Geometrically, this corresponds to the intuition that if a surface contains a smooth curve, then the surface is smooth near the curve. Consequently, the definition of regularity got globalized as follows: A Noetherian ring $R$ is regular if its localizations with respect to all prime ideals are regular. Using homological techniques, Auslander and Buchsbaum proved in 1950's that every regular local ring is a UFD.

A  Noetherian local ring $(R,\m)$ is a complete intersection if the completion $\hat{R}$ of $R$ with respect to the $\m$-adic topology is the quotient ring of a regular local ring modulo an ideal generated by a regular sequence. The ring $R$ is Gorenstein if its injective dimension (as an $R$-module) is finite; and $R$ is Cohen-Macaulay if grade and height coincide for every ideal of $R$. These notions are globalized by carrying over to localizations with respect to the prime ideals. We have the following diagram of implications:
\bigskip

\begin{center}
Regular ring\\
$\Downarrow$ \\
(Locally) Complete Intersection ring\\
$\Downarrow$ \\
Gorenstein ring\\
$\Downarrow$ \\
Cohen-Macaulay ring\\
$\Downarrow$ \\
Noetherian ring
\end{center}
\bigskip

In this paper we will tackle a problem, originally initiated by Grothendieck \cite{Gr}, on the possible transfer of regularity to tensor products of k-algebras. Recently, it has been proved that a Noetherian tensor product of $k$-algebras $A\otimes_{k}B$ inherits from $A$ and $B$ the notions of  locally complete intersection ring, Gorenstein ring, and Cohen-Macaulay ring \cite{BK,HTY, S, S3,TY}. In particular, $K\otimes_{k}L$ is a locally complete intersection ring, for any two extension fields $K$ and $L$ of $k$ such that $K\otimes_{k}L$ is Noetherian \cite[Proposition 5]{TY}. Notice at this point that tensor products of rings subject to the above concepts were recently used to broaden or delimit the context of validity of some homological conjectures; see for instance \cite{HJ,J}.

As to regularity, the problem remains elusively open. Indeed, contrary to the above notions, a Noetherian tensor product of two extension fields of $k$ is not regular in general. In 1965, Grothendieck proved that $K\otimes_{k}L$ is a regular ring provided $K$ or $L$ is a finitely generated separable  extension field of $k$ \cite[Lemma 6.7.4.1]{Gr}. In 1969, Watanabe, Ishikawa, Tachibana, and Otsuka, showed that under a suitable condition tensor products of regular rings are complete intersections \cite[Theorem 2, p. 417]{WITO}.  In 2003, Tousi and Yassemi proved that a Noetherian tensor product of two $k$-algebras $A$ and $B$ is regular if and only if so are $A$ and $B$ in the special case where $k$ is perfect; i.e., every (algebraic) extension of $k$ is separable \cite{TY, HTY}.

Recall that regularity, though a topic of commutative Noetherian rings, proved to be well approached via homological methods. In fact, a characterization of regular homomorphisms
$R\longrightarrow S$ is given by the vanishing of the first Andr\'e-Quillen homology functor $D_{1}(S/R,-)$. In the case of a homomorphism of fields $k\longrightarrow K$, the vanishing of $D_{1}(K/k,-)$ totally characterizes separability of $K$ over $k$. So that, under separability and Noetherianity, $K\otimes_kA$ inherits regularity via base change.
Nevertheless, the case of tensor products of $k$-algebras involving purely inseparable extensions of $k$ remains unexplored. The main goal of this paper is to handle such a case. Actually, our main result (Theorem~\ref{Reg:2}) establishes necessary and sufficient conditions for a Noetherian tensor product of two extension fields of $k$ to inherit regularity; and hence generalizes Grothendieck's aforementioned result. As a prelude to this, we revisit the constructions of the form $A\otimes_{k}B$ where  $A$ or $B$ is geometrically regular (Lemma~\ref{Reg:1}) and then offer a new direct proof (without use of Andr\'e-Quillen homology). We close with a discussion of the correlation between $A\otimes_kB$ and its fiber rings when subject to regularity. It turns out that, in case $A$ (or $B$) is assumed to be residually separable, $A\otimes_{k}B$ is regular if and only if so are $A$ and $B$ (Theorem~\ref{Reg:3}). This is a slight improvement of \cite[Theorem 6(c)]{TY}. All along the paper, several original examples are provided to illustrate or delimit the scope of the established results.
\end{section}

%%%%%%%%%%%%%%%%%%%%%%%%%%%%%%%%%%%%%%%%%%%%%%%%%%%%%%%%%%%%%%%%%%%%%%%%%%%%%%%%%%%%%%%%%%%%%%%%%%%%%%%%%%%%%%%%%%%%%%%%%%%%%%%%%%%%%%%%%%%%%%
%%%%%%%%%%%%%%%%%%%%%%%%%%%%%%%%%%%%%%%%%%%%%%%%%%%%%%%%%%%%%%%%%%%%%%%%%%%%%%%%%%%%%%%%%%%%%%%%%%%%%%%%%%%%%%%%%%%%%%%%%%%%%%%%%%%%%%%%%%%%%%
%%%%%%%%%%%%%%%%%%%%%%%%%%%%%%%%%%%%%%%%%%%%%%%%%%%%%%%%%%%%%%%%%%%%%%%%%%%%%%%%%%%%%%%%%%%%%%%%%%%%%%%%%%%%%%%%%%%%%%%%%%%%%%%%%%%%%%%%%%%%%%
%%%%%%%%%%%%%%%%%%%%%%%%%%%%%%%%%%%%%%%%%%%%%%%%%%%%%%%%%%%%%%%%%%%%%%%%%%%%%%%%%%%%%%%%%%%%%%%%%%%%%%%%%%%%%%%%%%%%%%%%%%%%%%%%%%%%%%%%%%%%%%
\begin{section}{Transfer of regularity to tensor products of $k$-algebras}\label{Reg}

A transcendence base $B$ of an extension field $K$ over $k$ is called a separating transcendence base if $K$ is separable algebraic over $k(B)$; and $K$ is said to be separable over $k$ if every finitely generated intermediate field has a separating transcendence base over $k$. Finally, recall that a Noetherian ring $A$ containing a field $k$ is said to be geometrically regular over $k$ if $A\otimes_{k} F$ is a regular ring for every finite extension $F$ of $k$; and a homomorphism $\varphi\colon A\rightarrow B$ of Noetherian rings is said to be regular if $\varphi$ is flat and $B\otimes_{A}\kappa_{A}(p)$ is geometrically regular over $\kappa_{A}(p)$ for each $p\in\Spec(A)$, where $\kappa_{A}(p)$ denotes the residue field of $A_{p}$ \cite[\S 32, pp. 255-256]{Mat}.

In 1965, Grothendieck proved that if $K$ and $L$ are two extension fields of $k$ such that either $K$ or $L$ is finitely generated over $k$ and if $K$ is separable over $k$, then  $K\otimes_{k}L$ is regular \cite[Lemma 6.7.4.1]{Gr}. More generally, if  $K$ is a separable extension field of $k$ and $A$ is a regular finitely generated $k$-algebra, then $K\otimes_kA$ is regular; indeed, separability implies that $k\rightarrow K$ is regular. Then a base change via the finite type homomorphism $k\rightarrow A$ yields that $A\rightarrow K\otimes_kA$ is regular since regularity of the fibers is preserved (as the residue fields of $A$ are finitely generated extensions of $k$). By \cite[Theorem 32.2(i)]{Mat}, $K\otimes_kA$ is regular.

 Now, let us substitute the assumption ``$K\otimes_kA$ is Noetherian" for ``$A$ is a finitely generated $k$-algebra." In this case, regularity is transferred to $K\otimes_kA$ through base change of regular homomorphisms via Andr\'e-Quillen homology (which requires no finite type assumption). Indeed, by \cite[(6.3)]{Iy}, $D_n(K\otimes A/A, -)\cong D_n(K/k, -)$ for every $n\in \Z$ (since we are in the trivial case where $\Tor_{n}^{k}(K,A) = 0$ for every $n\geq1$). Then, by \cite[Theorem 9.5]{Iy}, $A\rightarrow K\otimes A$ is regular if and only if $k\rightarrow K$ is regular. So that, under separability and Noetherianity,    $K\otimes_kA$ is regular if and only if so is $A$. We were not able to locate any explicit reference for this result. Next we record this fact in a slightly more general form  and also offer a new direct proof (without use of Andr\'e-Quillen homology) via the prime ideal structure (Cf. \cite[Proposition 4.14]{BDK}).

%%%%%%%%%%%%%%%%%%%%%%%%%%%%%%%%%%%%%%%%%%%%%%%%%%%%%%%%%%%%%%%%%%
%%%%%%%%%%%%%%%%%%%%%%%%%%%%%%%%%%%%%%%%%%%%%%%%%%%%%%%%%%%%%%%%%%
\begin{lemma}\label{Reg:1}
Let $A$ and $B$ be two $k$-algebras such that $A$ is geometrically regular. Then the following assertions are equivalent:
\begin{enumerate}
\item $A\otimes_kB$ is regular;
\item $B$ is regular and $A\otimes_kB$ is Noetherian.
\end{enumerate}
\end{lemma}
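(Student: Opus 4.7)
The proof splits into the two directions.

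For $(i)\Rightarrow(ii)$, Noetherianity of $A\otimes_k B$ is part of the definition of regularity. For the regularity of $B$, the key observation is that, since $A$ is a nonzero $k$-vector space and hence faithfully flat over $k$, the morphism $B\to A\otimes_k B$ is faithfully flat. Noetherianity of $B$ descends from that of $A\otimes_k B$ via the standard argument (a strictly ascending chain of ideals of $B$ remains strict after extension). Given any $\mathfrak{q}\in\Spec(B)$, lying-over yields a prime $\mathfrak{P}\in\Spec(A\otimes_k B)$ above $\mathfrak{q}$, and the induced map $B_\mathfrak{q}\to (A\otimes_k B)_\mathfrak{P}$ is a faithfully flat local homomorphism of Noetherian local rings with regular target; the standard descent of regularity along faithfully flat local homomorphisms then gives that $B_\mathfrak{q}$ is regular.

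For $(ii)\Rightarrow(i)$, I verify regularity at each prime $\mathfrak{P}\in\Spec(A\otimes_k B)$. Setting $\mathfrak{q}=\mathfrak{P}\cap B$, the induced map $B_\mathfrak{q}\to(A\otimes_k B)_\mathfrak{P}$ is flat local with regular source. By the flat-local criterion for regularity \cite[Theorem 23.7]{Mat}, it suffices to check that its closed fiber, which is a localization of $A\otimes_k\kappa(\mathfrak{q})$, is regular. Thus the problem reduces to the following \emph{key assertion}: if $A$ is geometrically regular over $k$ and $K/k$ is any field extension with $A\otimes_k K$ Noetherian, then $A\otimes_k K$ is regular.

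For this key assertion, I would fix $\mathfrak{Q}\in\Spec(A\otimes_k K)$ and exploit Noetherianity of $A\otimes_k K$: the ideal $\mathfrak{Q}$ is finitely generated, and the finitely many elements of $K$ appearing in the generators lie in a finitely generated subfield $K_0\subseteq K$. Setting $\mathfrak{Q}_0=\mathfrak{Q}\cap(A\otimes_k K_0)$, one has $\mathfrak{Q}=\mathfrak{Q}_0(A\otimes_k K)$, and the induced map $(A\otimes_k K_0)_{\mathfrak{Q}_0}\to(A\otimes_k K)_\mathfrak{Q}$ is faithfully flat local with closed fiber equal to the residue field $\kappa(\mathfrak{Q})$. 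The flat-local criterion therefore reduces the problem to the regularity of $A\otimes_k K_0$ at $\mathfrak{Q}_0$, i.e., to the case where $K_0/k$ is finitely generated. Picking a transcendence basis $t_1,\ldots,t_r$ of $K_0/k$, the intermediate ring $A\otimes_k k(t_1,\ldots,t_r)$ is a localization of the regular ring $A[t_1,\ldots,t_r]$ and hence is regular; the finite algebraic extension $k(t_1,\ldots,t_r)\subseteq K_0$ on top is then to be handled by bringing in the geometric regularity of $A$ applied to the finite subextensions of $K_0/k$ that enter the argument.

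The principal obstacle is this last finite-algebraic step when $K_0/k$ has a nontrivial inseparable component: Grothendieck's separating-transcendence-basis argument \cite[Lemma 6.7.4.1]{Gr} breaks down in this situation, and the crucial extra input is precisely that $A\otimes_k F$ is regular for \emph{every} finite extension $F/k$—including the purely inseparable ones. This is exactly the defining property of geometric regularity (as opposed to mere regularity of $A$), and it is what enables the reduction to go through.
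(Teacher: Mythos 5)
Your overall architecture is sound and in fact mirrors the paper's proof: (i) $\Rightarrow$ (ii) by faithfully flat descent of Noetherianity and of regularity along $B\to A\otimes_kB$ (the paper cites \cite[Corollary 4]{TY} for exactly this), and (ii) $\Rightarrow$ (i) by reducing, via the flat local criterion \cite[Theorem 23.7]{Mat}, first to the fibers $A\otimes_k\kappa_B(\mathfrak q)$ (the paper's Step 2) and then to a finitely generated subfield $K_0\subseteq K$ (the paper's Step 1). Your passage from $K$ to $K_0$ --- using that the finitely many generators of $\mathfrak Q$ already live in $A\otimes_kK_0$, so that $\mathfrak Q=\mathfrak Q_0(A\otimes_kK)$ --- is a cleaner version of the paper's ascending-chain argument (its Claims 1 and 2), and your appeal to \cite[Theorem 23.7]{Mat} with closed fiber $\kappa(\mathfrak Q)$ replaces the paper's explicit regular-sequence computation (its Claim 3). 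Up to this point everything checks out.

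The genuine gap is the last step: regularity of $A\otimes_kK_0$ for $K_0/k$ finitely generated. Your proposed route --- handle $k(t_1,\ldots,t_r)$ as a localization of $A[t_1,\ldots,t_r]$, then handle the finite algebraic extension $k(t_1,\ldots,t_r)\subseteq K_0$ ``by geometric regularity applied to the finite subextensions of $K_0/k$'' --- does not close. The finite algebraic step sits over $k(t_1,\ldots,t_r)$, not over $k$; geometric regularity of $A$ only controls $A\otimes_kF$ for $F$ \emph{finite over $k$}, and when $K_0/k$ is not separably generated no choice of transcendence basis pushes the inseparability down into a finite extension of $k$ (the algebraic closure of $k$ in $K_0$ can even be trivial while $K_0$ is inseparable over every $k(t_1,\ldots,t_r)$). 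So naming the purely inseparable finite extensions of $k$ as ``the crucial extra input'' identifies the obstacle without overcoming it. The missing dévissage is standard but not automatic: choose a finite purely inseparable extension $k_1/k$ such that the composite $K_0k_1$ is separably generated over $k_1$; then $A\otimes_kk_1$ is regular by hypothesis, $A\otimes_k(K_0k_1)\cong(A\otimes_kk_1)\otimes_{k_1}(K_0k_1)$ is regular by the separable case \cite[Lemma 6.7.4.1]{Gr}, and regularity descends to $A\otimes_kK_0$ along the finite free (hence faithfully flat) extension $A\otimes_kK_0\to A\otimes_k(K_0k_1)$. To be fair, the paper absorbs all of this into the parenthetical ``(recall that $A$ is geometrically regular)'' when asserting that $D(E)$ is regular for $E$ finitely generated, so your sketch is no less complete than the published one at this point --- but as a self-contained proof it still needs this argument supplied.
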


%%%%%%%%%%%%%%%%%%%%%
%%%%%%%%%%%%%%%%%%%%%
\begin{proof}
The implication (i) $\Rightarrow$ (ii) is straightforward by \cite[Corollary 4]{TY}. Next, we prove (ii) $\Rightarrow$ (i) via two steps.

{\em Step \rm1.} Suppose that $B=K$ is an extension field of $k$ such that $A\otimes_kK$ is Noetherian.  Let $\Delta$ denote the set of all finitely generated extension
fields of $k$ contained in $K$ and let  $$D:=A\otimes_kK=\lim_{\substack{\rightarrow\\ E\in \Delta}}D(E)$$ where $D(E):=A\otimes_kE$ for each $E\in \Delta$. Fix a prime ideal $P$ of
$D$ and let $P_E:=P\cap D(E)$ for each $E\in \Delta$.

\begin{claim} If $F\in \Delta$ such that $P_ED=P_FD$ for each $E\in
\Delta$ containing $F$, then $P=P_FD$.\end{claim}

In fact, let $F\in \Delta$ such that $P_ED=P_FD$ for each $E\in
\Delta$ containing $F$. Let $x\in P$. Then there exists
$E^{\prime}\in \Delta$ such that $x\in D(E^{\prime})$, and thus
$x\in P_{E^{\prime}}D$. Whence, $x\in P_{F(E^{\prime})}=P_FD$, where
$F(E^{\prime})$ denotes the composite field of $F$ and $E^{\prime}$
in $K$. It follows that $P=P_FD$, proving the claim.

\begin{claim} There exists $E\in \Delta$ such that $P=P_ED$.\end{claim}

Assume, by way of contradiction, that $P_ED\subsetneqq P$ for any $E\in
\Delta$ (notice that under this hypothesis $K$ is necessarily infinitely generated over $k$; i.e., $K\notin \Delta$). Choose $E_1\in \Delta$. By Claim 1, there exists $E_2\in \Delta$ containing
$E_1$ such that $P_{E_1}D\subsetneqq P_{E_2}D$. Iterating this
process yields the following infinite chain of ideals in $D$
$$P_{E_1}D\subsetneqq P_{E_2}D\subsetneqq ...\subsetneqq P_{E_n}D\subsetneqq ...\subsetneqq P$$ where the $E_j\in \Delta$. This
leads to a contradiction since $D$ is Noetherian. Hence there exists
$E\in \Delta$ such that $P=P_ED$, as desired.

\begin{claim} $PD_P$ is generated by a $D_P$-regular sequence.\end{claim}

Indeed, by Claim 2, $P=P_ED$ for some $E\in\Delta$. Now, observe that
$$D_P:=(A\otimes_kK)_P\cong\Big (D(E)_{P_E}\otimes_EK\Big )_P\ \mbox
{ and }\ PD_P\cong \Big (P_ED(E)_{P_E}\otimes_EK\Big )D_P$$ with
$P_ED(E)_{P_E}$ being the maximal ideal of $D(E)_{P_E}$. As $E$
is finitely generated over $k$, $D(E)$ is regular (recall that $A$ is geometrically regular). Hence $D(E)_{P_E}$ is a regular local ring.
By \cite[Theorem 169]{Kap}, $P_ED(E)_{P_E}$ is generated by a
$D(E)_{P_E}$-regular sequence $x_1,x_2,...,x_r$. Further, it is
easily seen that $x_1\otimes_k1,x_2\otimes_E1,...,x_r\otimes_E1$ is
a $D(E)_{P_E}\otimes_EK$-regular sequence of $P_ED(E)_{P_E}\otimes_EK$. As $\Big (P_ED(E)_{P_E}\otimes_EK\Big
)D_P\cong PD_P$, we get, by \cite[Theorem 133]{Kap},
$\displaystyle
{\frac {x_1\otimes_E1}1,\frac {x_2\otimes_E1}1,...,\frac
{x_n\otimes_E1}1}$ is a $D_P$-regular
sequence of $PD_P$. Now, since
$P_ED(E)_{P_E}=(x_1,x_2,...,x_n)D(E)_{P_E}$, we get $$PD_P=\Big
(\displaystyle {\frac {x_1\otimes_E1}1,\frac
{x_2\otimes_E1}1,...,\frac {x_n\otimes_E1}1}\Big )D_P$$ establishing
the claim.

 It follows, by \cite[Theorem 160]{Kap}, that $D_P$ is a regular local
 ring. Consequently, $D$ is a regular ring, as desired.

{\em Step \rm2.}  Suppose that $B$ is a regular $k$-algebra such that $A\otimes_kB$
 is Noetherian. Let $q\in \Spec(B)$. First, as $A\otimes_kB$ is Noetherian,
$A\otimes_kk_B(q)$, being a localization of a quotient
 of $A\otimes_kB$, is Noetherian. Then, by Step 1,
 $A\otimes_kk_B(q)$ is regular for each $q\in\Spec(B)$. Now,
\cite[Corollary 2]{TY} yields that $A\otimes_kB$ is regular, completing the proof of the theorem.
\end{proof}
%%%%%%%%%%%%%%%%%%%%%
%%%%%%%%%%%%%%%%%%%%%

In particular, if $K$ is a separable extension field of $k$ and $A$ is a $k$-algebra, then $K\otimes_kA$ is regular if and only if $A$ is regular and $K\otimes_kA$ is Noetherian. Example~\ref{Reg:4} shows that this result is not true, in general, if one substitutes pure inseparability for separability; and that, however, this latter condition is not necessary.

Recall that if $K$ and $L$ are two extension fields of $k$ such that one of them is finitely generated, then $K\otimes_kL$ is Noetherian \cite{WITO}. The converse is not true in general; e.g., $$\Q(x_{1}, x_{2}, ... )\otimes \Q(\sqrt{2}, \sqrt{3}, ... )\cong \Q(\sqrt{2}, \sqrt{3}, ... )(x_{1}, x_{2}, ... )$$ is a field, where $x_{1}, x_{2}, ...$ are infinitely many indeterminates over $\Q$. However, the converse holds in the case $K=L$ \cite[Corollary 3.6]{Fer} or \cite[Theorem 11]{V}. These facts combined with Lemma~\ref{Reg:1} yield the following remark, where the separability assumption is required only for regularity.

%%%%%%%%%%%%%%%%%%%%%%%%%%%%%%%%%%%%%%%%%%%%%%%%%%%%%%%%%%%%%%%%%%
%%%%%%%%%%%%%%%%%%%%%%%%%%%%%%%%%%%%%%%%%%%%%%%%%%%%%%%%%%%%%%%%%%
\begin{remark}\label{Reg:1.1}
Let $K$ and $L$ be two extension fields of $k$ and assume that $K$ is separable over $k$. Then:
$K$ or $L$ is finitely generated $\Rightarrow $ $K\otimes_kL$ is Noetherian $\Leftrightarrow$ $K\otimes_kL$ is regular. The special case where $K=L$ is handled by Corollary~\ref{Reg:2.2}.
\end{remark}

For an arbitrary $k$-algebra $A$ (not necessarily a domain), the transcendence degree over $k$ is given by (cf. \cite[p. 392]{W}) $$\td(A:k):=\Sup\{\td(\dfrac Ap:k)\mid p\in \Spec(A)\}.$$ Further, if $A$ and $B$ are two $k$-algebras such that $A\otimes _kB$ is Noetherian, then necessarily $A$ and $B$ are  Noetherian rings and either $\td(A:k)<\infty$ or $\td(B:k)<\infty$  (cf. \cite[p. 69]{BK}). Also, for any two extension fields $K$ and $L$ of $k$, \cite[Theorem 3.1]{S2} asserts that $$\dim(K\otimes_kL)=\min\{\td(K:k),\td(L:k)\}.$$ These facts allow one to give illustrative examples of regular tensor products (of fields) of arbitrary dimension.

%%%%%%%%%%%%%%%%%%%%%%%%%%%%%%%%%%%%%%%%%%%%%%%%%%%%%%%%%%%%%%%%%%
%%%%%%%%%%%%%%%%%%%%%%%%%%%%%%%%%%%%%%%%%%%%%%%%%%%%%%%%%%%%%%%%%%
\begin{example}\label{Reg:1.2}\rm
Let $x_{1}, x_{2}, ...$ be infinitely many indeterminates over $k$. Then, for any positive integer $n$,
$k(x_{1}, ..., x_{n})\otimes k(x_{1}, x_{2}, ...)$ is an $n$-dimensional regular ring.
\end{example}

Note that $k(x_{1}, ..., x_{n})$ and $k(x_{1}, x_{2}, ...)$ are (non-algebraic) separable extensions of $k$ by Mac Lane's Criterion. For the algebraic separable case, see Example~\ref{Reg:2.5}.

Let $K$ and $L$ be two extension fields of $k$. Assume that $K$ is purely inseparable over $k$ and let $\overline{L}$ be an algebraic
closure of $L$. Then there exists a unique $k$-homomorphism $u:K\rightarrow \overline{L}$ \cite[Proposition 3, p. V.25]{B4-7}, and
the isomorphic image $u(K)$ is obviously purely inseparable over $k$. In this vein, we can always view
 $K$ and $L$ as subfields of a common field $\overline{L}$. Recall Mac Lane's notion of linear disjointness; namely, $K$ and $L$ are linearly disjoint over $k$ if every subset of $K$ which is linearly independent over $k$ is also linearly independent over $L$; equivalently, if $K\otimes_kL$ is a domain.

In the sequel, given an extension field $K$ of $k$, $K_s$ and $K_i$ will denote the (not necessarily algebraic) separable closure and (algebraic) purely inseparable  closure
of $k$ in $K$, respectively. Notice that $K$ is an extension field of the composite field $K_{s}K_{i}$ and the equality $K_{s}K_{i}= K$ holds, for instance, when $K$ is separable, purely inseparable, or normal over $k$.

The next main result of this paper handles the tensor products of two extensions fields, which will be used to generate new and original examples of regular tensor products of extension fields. It is worthwhile noting that this result falls beyond the scope of Andr\'e-Quillen homology (since purely inseparable field extensions are not geometrically regular).

%%%%%%%%%%%%%%%%%%%%%%%%%%%%%%%%%%%%%%%%%%%%%%%%%%%%%%%%%%%%%%%%%%
%%%%%%%%%%%%%%%%%%%%%%%%%%%%%%%%%%%%%%%%%%%%%%%%%%%%%%%%%%%%%%%%%%
\begin{thm}\label{Reg:2}
Let $K$ and $L$ be two extension fields of $k$ such that $K\otimes_kL$ is Noetherian. Assume that $K=K_{s}K_{i}$ and let $K_{i}=k(S)$ for some generating subset $S$ of $K_{i}$. Then the following assertions are equivalent:
\begin{enumerate}
\item $K\otimes_kL$ is regular;

\item $K_i\otimes_kL$ is a domain;

\item $K_i\otimes_kL$ is a field;

\item $[k(S'):k]=[L(S'):L]$ for each finite subset $S'$ of $S$;

\item $K_i\cap L(S')=k(S')$ for each finite subset $S'$ of $S$.
\end{enumerate}
\end{thm}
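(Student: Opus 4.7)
My plan is to peel off the separable part $K_s$ via Lemma~\ref{Reg:1}, reducing regularity of $K\otimes_k L$ to a question about $K_i\otimes_k L$, and then exploit the Artin-local structure of the finite layers $k(S')\otimes_k L$ coming from Frobenius.

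First I would establish that $K\cong K_s\otimes_k K_i$ as $k$-algebras. Since $K_s/k$ is separable, Mac Lane's criterion gives that $K_s\otimes_k k^{1/p^\infty}$ is reduced, so the subring $K_s\otimes_k K_i$ (using $K_i\subseteq k^{1/p^\infty}$ and $k$-flatness of $K_s$) is itself reduced; a Frobenius computation shows that the kernel of the canonical surjection $K_s\otimes_k K_i\twoheadrightarrow K_sK_i=K$ consists of nilpotents, so the map is in fact an isomorphism. Consequently $K\otimes_k L=K_s\otimes_k(K_i\otimes_k L)$. Since $K_s/k$ is separable, $K_s$ is geometrically regular over $k$ (for any finite $F/k$, $K_s\otimes_k F$ is reduced and Artinian, hence a product of fields). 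Applying Lemma~\ref{Reg:1} together with the hypothesis that $K\otimes_k L$ is Noetherian, I obtain the equivalence ``(i) iff $K_i\otimes_k L$ is regular''.

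Next I analyze $K_i\otimes_k L=\varinjlim_{S'}k(S')\otimes_k L$ as $S'$ runs over the finite subsets of $S$. A direct Frobenius computation shows that every $x=\sum a_i\otimes b_i\in k(S')\otimes_k L$ satisfies $x^{p^N}\in 1\otimes L$ for $N$ large, so that each $k(S')\otimes_k L$ is local Artinian with residue field $L(S')$ and maximal ideal equal to its nilradical. From this I deduce (i) $\Leftrightarrow$ (ii) $\Leftrightarrow$ (iii) $\Leftrightarrow$ (iv): regularity (hence reducedness) of $K_i\otimes_k L$ forces each Artin-local subring $k(S')\otimes_k L$ to be a field, and the direct limit of fields along the injective transition maps is a field; conversely, a field is regular. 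The equivalence (ii) $\Leftrightarrow$ (iii) runs identically with ``domain'' replacing ``reduced''. Finally, $k(S')\otimes_k L$ is a field iff the canonical surjection $k(S')\otimes_k L\twoheadrightarrow L(S')$ is an isomorphism, i.e.\ $[L(S'):L]=[k(S'):k]$.

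The remaining equivalence (iv) $\Leftrightarrow$ (v) is purely field-theoretic. For (v) $\Rightarrow$ (iv), I enumerate $S'=\{x_1,\ldots,x_n\}$ and, if (iv) fails, pick the first index $i$ with $[L(x_1,\ldots,x_i):L(x_1,\ldots,x_{i-1})]=p^{m'}<p^m=[k(x_1,\ldots,x_i):k(x_1,\ldots,x_{i-1})]$; then $x_i^{p^{m'}}\in K_i\cap L(x_1,\ldots,x_{i-1})$ lies outside $k(x_1,\ldots,x_{i-1})$ by minimality of $p^m$, violating (v) on $\{x_1,\ldots,x_{i-1}\}\subseteq S$. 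For (iv) $\Rightarrow$ (v), any $y\in(K_i\cap L(S'))\setminus k(S')$ with $y^{p^m}\in k(S')$ and $m\geq 1$ yields $k(S',y)\otimes_k L\cong L(S')[T]/((T-y)^{p^m})$ (using (iv) for $S'$), which is non-reduced; choosing a finite $S''\subseteq S$ with $y\in k(S'')$, this non-reduced ring embeds in $k(S'\cup S'')\otimes_k L$, contradicting (iv) for $S'\cup S''$.

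The main obstacle I anticipate is the bookkeeping in (iv) $\Rightarrow$ (v): the element $y$ producing the nilpotent need not lie in $S$, so one must enlarge $S'$ to a finite $S^*\subseteq S$ containing generators for $y$ in order to detect the failure of (iv) at a genuine finite subset of $S$. A subsidiary technical point is checking $K_s\otimes_k K_i\cong K$ without assuming $K/k$ is algebraic, which is handled by the Mac Lane/Frobenius argument above.
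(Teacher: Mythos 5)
Your proof is correct, and its skeleton coincides with the paper's: both isolate the isomorphism $K\cong K_s\otimes_kK_i$, invoke Lemma~\ref{Reg:1} (with $K_s$ geometrically regular) to reduce regularity of $K\otimes_kL$ to that of $K_i\otimes_kL$, and then analyze the purely inseparable part through its finite layers $k(S')\otimes_kL$. The differences lie in the local arguments. For $K_s\otimes_kK_i\cong K$, the paper shows this ring is reduced, zero-dimensional, von Neumann regular with a unique minimal prime (citing V\'amos), hence a field; your Frobenius computation showing that the kernel of $K_s\otimes_kK_i\twoheadrightarrow K_sK_i$ is nilpotent is an equally valid and more self-contained substitute. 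For the equivalences among (ii)--(v), the paper cites Bourbaki's linear-disjointness criterion for (ii) $\Leftrightarrow$ (iv) and (ii) $\Rightarrow$ (v), and proves (v) $\Rightarrow$ (iii) by induction on generators using irreducibility of $X^{p^m}-a$ over $L$; you instead observe that each $k(S')\otimes_kL$ is Artin local with nilpotent maximal ideal and residue field $L(S')$, so that reducedness forces each layer to be a field and a dimension count gives (iv), and you settle (iv) $\Leftrightarrow$ (v) by a purely field-theoretic degree argument (your non-reduced quotient $L(S')[T]/\big((T-y)^{p^m}\big)$ is essentially the paper's irreducibility argument run in reverse). Your version trades the citations to V\'amos and Bourbaki for elementary Frobenius computations at the cost of slightly more bookkeeping. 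One point to make explicit in a final write-up: the characteristic-zero case (where $K_i=k$ and (ii)--(v) are vacuous) should be dispatched first, as the paper does, since every Frobenius step presumes $p>0$.
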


%%%%%%%%%%%%%%%%%%%%%
%%%%%%%%%%%%%%%%%%%%%
\begin{proof} Let $p:=\charac(k)$. The theorem easily holds when $p=0$ (in which case $k$ is perfect). Next, assume $p\geq 1$. Since $K_{s}$ is a separable extension of $k$, $K_{s}\otimes_{k}K_{i}$ is reduced \cite[Chap. III, \S 15, Theorem 39]{ZS}. Further, since $K_{i}$ is algebraic over $k$, $K_{s}\otimes_{k}K_{i}$ is zero-dimensional \cite[Theorem 3.1]{S2} and hence a von Neumann regular ring \cite[Ex. 22, p. 64]{Kap}. By \cite[Proposition 2(c)]{V}, $K_{s}\otimes_{k}K_{i}$ has one unique minimal prime ideal. It follows that  $K_{s}\otimes_{k}K_{i}$ is local and therefore a field. Now,
consider the surjective ring homomorphism $\varphi: K_{s}\otimes_{k}K_{i} \rightarrow K_{s}(K_{i})$, given on generators of $K_{s}\otimes_{k}K_{i}$ by $a\otimes b\mapsto ab$ (as $K_{s}$ and $K_{i}$ may be contained in a common field). So $\varphi$ is an isomorphism; that is, $K_{s}\otimes_{k}K_{i}\cong K_{s}K_{i}=K$. By Lemma~\ref{Reg:1}, $K\otimes_kL\cong K_{s}\otimes_{k}(K_{i}\otimes_kL)$ is regular if and only if $K_{i}\otimes_kL$ is regular.
Hence, for the rest of the proof,  we may suppose that $K$ is a purely inseparable algebraic extension field of $k$ (i.e., $K=K_i$) with  $\charac(k)=p\not=0$. Same arguments as above yield $K\otimes_kL$ is a zero-dimensional local ring and, therefore, (i) $\Rightarrow$ (ii) $\Rightarrow$ (iii) $\Rightarrow$ (i). Moreover, the assumption ``$K\otimes_kL$ is a domain" is equivalent to saying that ``$K$ and $L$ are linearly disjoint over $k$," as mentioned above. So that we get (ii) $\Leftrightarrow$ (iv) by \cite[Proposition 5 (a), p. V.13]{B4-7} and (ii) $\Rightarrow$ (v) by \cite[p. V.13]{B4-7} and via the isomorphism $K\otimes_kL\cong K\otimes_{k(S')}\big(k(S')\otimes_kL\big)$
for each finite subset $S'$ of $S$.

(v) $\Rightarrow$ (iii) Let $x\in S$ and let $p^{m} = [k(x):k]$ with $m$ an integer $\geq 0$. Then $a:=x^{p^{m}}\in k$. We wish to show that $k(x)\otimes_kL$ is a field. We may assume $x\notin k$. By (v),  $x^{p^r}\not\in K\cap L=k$ for each
positive integer $r<m$. Therefore, $x\in \overline L\setminus L$, where $\overline L$ denotes
an algebraic closure of $L$, forcing $(X^{p^m}-a)$ $(=(X^{p^r}-x^{p^r})^{p^{m-r}}$
for each positive integer $r<m$) to be irreducible in $L[X]$. It follows that
 $$k(x)\otimes_kL\cong \dfrac{k[X]}{(X^{p^m}-a)}\otimes_kL\cong\dfrac{L[X]}{(X^{p^m}-a)}\cong L[x] = L(x)$$
where $X$ denotes an indeterminate over $\overline{L}$. So $k(x)\otimes_kL$ is a field. Next, let $x_1,...,x_n\in S$. We
 have $$k(x_1,...,x_n)\otimes_kL\cong k(x_1,...,x_n)\otimes_{k(x_1,...,x_{n-1})}\big(k(x_1,...,x_{n-1})\otimes_kL\big).$$ By
 induction on $n$, $k(x_1,...,x_{n-1})\otimes_kL\cong L(x_1,...,x_{n-1})$ is a
 field and, by (v), we get $$k(x_1,...,x_n)\cap L(x_1,...,x_{n-1})\subseteq K\cap L(x_1,...,x_{n-1})=k(x_1,...,x_{n-1})$$ so
 that $$k(x_1,...,x_n)\cap L(x_1,...,x_{n-1})=k(x_1,...,x_{n-1}).$$ Hence, the first step yields
 $$k(x_1,...,x_n)\otimes_kL\cong k(x_1, ..., x_{n-1})(x_n)\otimes_{k(x_1,...,x_{n-1})} L(x_1,...,x_{n-1})$$ is a field. Let $\Delta$ denote the set of all finite subset $S'$ of $S$ and observe that $$K\otimes _kL=\lim_{\substack{\rightarrow\\ S'\in \Delta}}k(S')\otimes _kL.$$
 Thus, $k(S')\otimes_kL$ is a field, for each $S'\in \Delta$, and so is their direct limit $K\otimes_kL$, establishing (iii) and completing the proof of the theorem.
\end{proof}

One can use Theorem~\ref{Reg:2}(v) to build new examples of regular tensor products of fields, as illustrated by the next example.

%%%%%%%%%%%%%%%%%%%%%%%%%%%%%%%%%%%%%%%%%%%%%%%%%%%%%%%%%%%%%%%%%%
%%%%%%%%%%%%%%%%%%%%%%%%%%%%%%%%%%%%%%%%%%%%%%%%%%%%%%%%%%%%%%%%%%
\begin{example}\label{Reg:2.1.1}\rm
Let  $p$ be a prime element of $\Z$ and let $y_1, y_2, ..., y_m, x_1, x_2, ..., x_n, ...$ be indeterminates over ${\dfrac{\Z}{p\Z}}$. Let
\[\begin{array}{lcl}
k   &:= &{\dfrac{\Z}{p\Z}}\Big(y_1^{p},y_2^{p^2},...,y_m^{p^m},x_1^{p},x_2^{p^2},...,x_n^{p^n},...\Big),\\
K   &:= &k(x_1,x_2,...,x_n,...),\\
L   &:= &k(y_1,y_2,...,y_m).
\end{array}\]
Then $K\otimes_kL$ is a regular ring.

Indeed, notice that $K$ and $L$ are purely inseparable extension
fields of $k$ with $[L:k]<\infty$. Also, we have
\[\begin{array}{lcl}
 K  & = &{\dfrac{\Z}{p\Z}}\Big(y_1^p,y_2^{p^2},...,y_m^{p^m}, x_1,x_2,..,x_n,...\Big),\\
 L  & = &{\dfrac{\Z}{p\Z}}\Big(y_1,y_2,...,y_m, x_1^{p},x_2^{p^2},...,x_n^{p^n},...\Big).
 \end{array}\]
Next, let $x_{i_1},x_{i_2},...,x_{i_r}$ be a finite subset of $\{x_1,x_2,...,x_n,...\}$. Then
\[\begin{array}{lcl}
K\cap L(x_{i_1},x_{i_2},...,x_{i_r})    &=  &{\dfrac{\Z}{p\Z}}\Big(x_{i_1},x_{i_2},...,x_{i_r},x_1^{p},x_2^{p^2},...,x_n^{p^n},...,y_1^p,y_2^{p^2},...,y_m^{p^m}\Big)\\
                                        & = &k(x_{i_1},x_{i_2},...,x_{i_r}).
\end{array}\]
Hence, by Theorem~\ref{Reg:2}(v), $K\otimes_kL$ is regular, as desired.
\end{example}

As a consequence of Theorem~\ref{Reg:2}(v), under the Noetherianity assumption, separability rises as a necessary (and sufficient) condition for regularity in the special case where $K=L$ as shown in the next corollary. It also refines \cite[Exercice 28, Chap. 8, p. 98]{BAC8-9} and links regularity of $K\otimes_kK$ to the projectivity of $K$ as a $K\otimes_kK$-module when $K$ is a finitely generated extension field of $k$ \cite[Theorem 7.10]{CE}.

%%%%%%%%%%%%%%%%%%%%%%%%%%%%%%%%%%%%%%%%%%%%%%%%%%%%%%%%%%%%%%%%%%
%%%%%%%%%%%%%%%%%%%%%%%%%%%%%%%%%%%%%%%%%%%%%%%%%%%%%%%%%%%%%%%%%%
\begin{corollary}\label{Reg:2.2}
Let $K$ be an extension field of $k$. The following assertions are equivalent:
\begin{enumerate}
\item $K\otimes_kK$ is regular;

\item $K\otimes_kK$ is Noetherian and $K$ is separable over $k$;

\item $K$ is a finitely generated separable extension field of $k$;

\item $K\otimes_kL$ is regular for each extension field $L$ of $k$;

\item $K$ is a finitely generated extension field of $k$ and a projective $K\otimes_kK$-module.
\end{enumerate}
\end{corollary}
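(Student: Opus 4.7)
The plan is to establish the cycle $\mathrm{(iv)}\Rightarrow\mathrm{(i)}\Rightarrow\mathrm{(ii)}\Rightarrow\mathrm{(iii)}\Rightarrow\mathrm{(iv)}$ and handle the equivalence with $\mathrm{(v)}$ separately. The easy implications are as follows. $\mathrm{(iv)}\Rightarrow\mathrm{(i)}$ is immediate on taking $L=K$. $\mathrm{(ii)}\Rightarrow\mathrm{(iii)}$ will follow from the cited result \cite[Corollary~3.6]{Fer} (or \cite[Theorem~11]{V}) that Noetherianity of $K\otimes_kK$ forces $K$ to be finitely generated over $k$. For $\mathrm{(iii)}\Rightarrow\mathrm{(iv)}$, assuming $K/k$ is finitely generated and separable, I first observe that $K$ is geometrically regular over $k$: for any finite extension $F/k$, the $k$-algebra $K\otimes_kF$ is finite-dimensional and reduced (separability being preserved under scalar extension), hence Artinian reduced, a finite product of fields, and in particular regular. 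For an arbitrary extension field $L/k$, $K\otimes_kL$ is Noetherian by \cite{WITO} (as $K$ is finitely generated over $k$), and Lemma~\ref{Reg:1} applied with $A=K$ and $B=L$ then yields regularity of $K\otimes_kL$.

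The crux is $\mathrm{(i)}\Rightarrow\mathrm{(ii)}$. Regularity of $K\otimes_kK$ trivially implies Noetherianity, whence by the cited result $K$ is finitely generated over $k$; what remains is separability, and this is the main obstacle of the plan. My approach is via K\"ahler differentials. The multiplication map $\mu\colon K\otimes_kK\twoheadrightarrow K$ has kernel $I$ that is maximal (the target being a field), and the conormal module $I/I^2$ is canonically isomorphic to $\Omega_{K/k}$. Regularity of $K\otimes_kK$ makes $(K\otimes_kK)_I$ a regular local ring, whose maximal ideal is minimally generated by $\htt(I)$ elements, so $\dim_K\Omega_{K/k}=\dim_K(I/I^2)=\htt(I)$. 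Combining with the dimension formula $\dim(K\otimes_kK)=\td(K:k)$ from \cite[Theorem~3.1]{S2}, we deduce $\dim_K\Omega_{K/k}\leq\td(K:k)$. On the other hand, the classical Cartier/Mac Lane-Pickert inequality for finitely generated field extensions asserts $\dim_K\Omega_{K/k}\geq\td(K:k)$, with equality if and only if $K/k$ is separable; hence $K/k$ must be separable, as required.

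Finally, for $\mathrm{(iii)}\Leftrightarrow\mathrm{(v)}$, I would invoke Cartan-Eilenberg \cite[Theorem~7.10]{CE}, which, under the finite-generation hypothesis, identifies separability of $K$ over $k$ with projectivity of $K$ as a $K\otimes_kK$-module.
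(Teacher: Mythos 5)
Your proposal is correct, and all of it except the implication (i) $\Rightarrow$ (ii) matches the paper's route: (iv) $\Rightarrow$ (i) is trivial, (ii) $\Rightarrow$ (iii) is the same citation to Ferrand/Vamos, (iii) $\Leftrightarrow$ (v) is the same appeal to Cartan--Eilenberg, and your (iii) $\Rightarrow$ (iv) via geometric regularity of a finitely generated separable extension plus Lemma~\ref{Reg:1} is just an inlined proof of the Grothendieck lemma that the paper cites directly. The genuine divergence is at the crux, (i) $\Rightarrow$ (ii). The paper argues by descent: from $K\otimes_kK\cong(K\otimes_EK)\otimes_kE$ and faithful flatness it deduces that $K\otimes_EK$ is regular for every intermediate field $E$, then takes $E$ to be the separable algebraic closure of $k(B)$ in $K$ for a transcendence basis $B$, so that $K/E$ is purely inseparable, and invokes Theorem~\ref{Reg:2}(v) to force $K=E$; the corollary is thus exhibited as a consequence of the paper's main theorem. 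You instead identify the conormal module of the diagonal with $\Omega_{K/k}$, use regularity at the kernel $I$ of the multiplication map together with Sharp's dimension formula to get $\dim_K\Omega_{K/k}=\htt(I)\leq\dim(K\otimes_kK)=\td(K:k)$, and conclude by the classical fact that a finitely generated extension satisfies $\dim_K\Omega_{K/k}\geq\td(K:k)$ with equality exactly when it is separably generated (your attribution ``Cartier/Mac Lane--Pickert'' is nonstandard; cite, e.g., \cite[Theorem 26.10]{Mat}). Both arguments are sound; yours is shorter and conceptually transparent (regularity at the diagonal is smoothness), at the cost of importing the differential criterion for separability from outside the paper, whereas the paper's proof is self-contained relative to Theorem~\ref{Reg:2} and deliberately avoids differential/homological machinery.
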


%%%%%%%%%%%%%%%%%%%%%
%%%%%%%%%%%%%%%%%%%%%
\begin{proof}
(i) $\Rightarrow$ (ii) Assume that $K\otimes_kK$
is regular. Then $K\otimes_kK$ is Noetherian, so that $K$ is
finitely generated over $k$. We claim that $K\otimes_EK$ is regular
for any extension field $E$ of $k$ contained in $K$. In effect, let
$E$ be a field extension of $k$ contained in $K$. Then
$$\begin{array}{lll}
K\otimes_kK &\cong  &K\otimes_E(E\otimes_kK)\\
            &\cong  &K\otimes_E(K\otimes_kE)\\
            &\cong  &(K\otimes_EK)\otimes_kE\ \mbox { (Cf. \cite[Ex. 2.15, p. 27]{AM}).}
\end{array}$$
It follows, by \cite[Theorem 23.7]{Mat} and by localization, that $K\otimes_EK$ is
regular, establishing the claim. Now, let $B$ be a finite
transcendence basis of $K$ over $k$ and let $E$ be the algebraic
separable closure of $k(B)$ in $K$. Then, via the above claim,
$K\otimes_EK$ is regular and $K$ is purely inseparable over $E$.
By Theorem~\ref{Reg:2}(v), $K=E$. It follows that $K$ is separable over
$k$, as desired.

(ii) $\Rightarrow$ (iii) is handled by \cite[Corollary 3.6]{Fer} or \cite[Theorem 11]{V} as mentioned above, (iii) $\Rightarrow$ (iv) follows from \cite[Lemma 6.7.4.1]{Gr}, (iv) $\Rightarrow$ (i) is trivial, and  (iii) $\Leftrightarrow$ (v) is a particular case of \cite[Theorem 7.10]{CE}, completing the proof of the corollary.
\end{proof}

One can use Theorem~\ref{Reg:2}(v) or Corollary~\ref{Reg:2.2} to build (zero-dimensional Noetherian local) tensor products of fields that are locally complete intersection but not regular, as shown below.

%%%%%%%%%%%%%%%%%%%%%%%%%%%%%%%%%%%%%%%%%%%%%%%%%%%%%%%%%%%%%%%%%%
%%%%%%%%%%%%%%%%%%%%%%%%%%%%%%%%%%%%%%%%%%%%%%%%%%%%%%%%%%%%%%%%%%
\begin{example}\label{Reg:2.3}\rm
Let $k\subsetneqq K\subseteq L$ be extension fields such that $K$ is purely inseparable over $k$ and $K\otimes_kL$ is Noetherian. Then $K\otimes_kL$ is a locally complete intersection ring \cite[Proposition 5(a)]{TY} which is not regular by Theorem~\ref{Reg:2}(v) (or Corollary~\ref{Reg:2.2}). For instance, for any prime $p$, one may simply take $$k:= \dfrac{\Z}{p\Z}(x^{p})\ \mbox{and}\ K=L:=\dfrac{\Z}{p\Z}(x)$$
where $x$ is an indeterminate over $\dfrac{\Z}{p\Z}$.
\end{example}

The next result handles the (algebraic) separable case featuring a slight generalization of \cite[Proposition 8]{V}. Recall, for convenience, that if $K$ is a separable extension of $k$, then $K\otimes_kL$ is always reduced for any extension field $L$ of $k$ \cite[Chap. III, \S 15, Theorem 39]{ZS}.

%%%%%%%%%%%%%%%%%%%%%%%%%%%%%%%%%%%%%%%%%%%%%%%%%%%%%%%%%%%%%%%%%%
%%%%%%%%%%%%%%%%%%%%%%%%%%%%%%%%%%%%%%%%%%%%%%%%%%%%%%%%%%%%%%%%%%
\begin{corollary}\label{Reg:2.4}
Let $K$ and $L$ be two extension fields of $k$ such that $K\otimes_kL$ is Noetherian. Assume that $K$ is algebraic over $k$. Then the following assertions are equivalent:
\begin{enumerate}
\item $K\otimes_kL$ is (von Neumann) regular;
\item $K\otimes_kL$ is reduced;
\item $K\otimes_kL$ is a finite product of fields.
\end{enumerate}
If, in addition, $K$ is separable and  $L$ is Galois over $k$ such that $K,L$ are contained in an algebraic closure of $k$, then the above are equivalent to:
\begin{enumerate}
\item[(iv)] $n:=[K\cap L:k]<\infty$.
\end{enumerate}
Moreover, $K\otimes_kL$ is isomorphic to the product of $n$ copies of the field $K(L)$.
\end{corollary}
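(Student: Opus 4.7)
The plan is to establish (i) $\Leftrightarrow$ (ii) $\Leftrightarrow$ (iii) via the Artinian structure of $K\otimes_kL$, and then handle (iv) through a mix of faithfully-flat descent and classical Galois theory.

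Since $K$ is algebraic over $k$, the dimension formula recalled earlier gives $\dim(K\otimes_kL)=0$; combined with the standing Noetherian hypothesis, this forces $K\otimes_kL$ to be Artinian. The implications (iii) $\Rightarrow$ (i) $\Rightarrow$ (ii) are then immediate, since a finite product of fields is von Neumann regular and von Neumann regular rings are reduced. The converse (ii) $\Rightarrow$ (iii) is the classical structure theorem for zero-dimensional reduced Noetherian rings, which decompose as the product of their residue fields at the (finitely many) minimal primes.

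Now assume in addition that $K$ is separable over $k$, $L$ is Galois over $k$, and both lie in a common algebraic closure; set $F:=K\cap L$. Separability of $K$ automatically yields that $K\otimes_kL$ is reduced, so the first part reduces the equivalence with (iv) to identifying Noetherianity of $K\otimes_kL$ with $[F:k]<\infty$ together with producing the explicit decomposition. For the forward direction I would apply faithfully-flat descent to the composite
$$F\otimes_kF\longrightarrow F\otimes_kL\longrightarrow K\otimes_kL,$$
each leg being the base change of a field extension (hence faithfully flat). Noetherianity descends along faithfully-flat maps (via the relation $IB\cap A=I$ applied to ascending chains), so $F\otimes_kF$ is Noetherian. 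Corollary~\ref{Reg:2.2}, applied to the separable extension $F/k$, then forces $F$ to be finitely generated over $k$, whence $[F:k]<\infty$ since $F/k$ is algebraic.

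For the converse and the explicit form, suppose $n:=[F:k]<\infty$. By the primitive element theorem write $F=k(\alpha)$ with separable minimal polynomial $f\in k[x]$ of degree $n$. Since $L/k$ is normal and $F\subseteq L$, $f$ splits into $n$ distinct linear factors in $L[x]$, and the Chinese Remainder Theorem yields $F\otimes_kL\cong L[x]/(f)\cong L^n$. Associativity of tensor product then gives
$$K\otimes_kL\cong K\otimes_F(F\otimes_kL)\cong (K\otimes_FL)^n.$$
Finally, since $L/F$ is Galois and $K\cap L=F$, the fields $K$ and $L$ are linearly disjoint over $F$, so $K\otimes_FL$ is a domain; being integral over $L$ (as $K/F$ is algebraic), it is zero-dimensional and hence a field, which must coincide with the composite $K(L)$. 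This yields $K\otimes_kL\cong K(L)^n$, settling both (iv) and the closing assertion. The principal technical point is the faithfully-flat descent step: its validity hinges on recognizing both legs of the composite as base changes of field extensions, so that Noetherianity of the top ring can actually be transferred back to $F\otimes_kF$.
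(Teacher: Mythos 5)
Your proof is correct. For (i) $\Leftrightarrow$ (ii) $\Leftrightarrow$ (iii) you follow essentially the paper's route: Sharp's formula gives $\dim(K\otimes_kL)=0$, so the ring is Artinian and the equivalences reduce to the structure theory of zero-dimensional reduced Noetherian rings (the paper merely cites Kaplansky and Vamos for these individual implications, while you spell them out). The real divergence is in the Galois part: the paper disposes of statement (iv) and the decomposition $K(L)^n$ with a single citation to Vamos's Proposition 8, whereas you give a self-contained argument --- faithfully flat descent of Noetherianity along $F\otimes_kF\to F\otimes_kL\to K\otimes_kL$ combined with Corollary~\ref{Reg:2.2} for the forward direction, and the primitive element theorem, splitting of the minimal polynomial in the normal extension $L$, and the classical fact that a Galois extension $L/F$ is linearly disjoint from any $K$ with $K\cap L=F$ for the converse and the explicit product decomposition. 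Every step checks out: both legs of your composite are base changes of (free, hence faithfully flat) field extensions, so Noetherianity does descend to $F\otimes_kF$; and $K\otimes_FL$ is a domain integral over a field, hence a field identifiable with the compositum. What your route buys is independence from Vamos's paper and an explicit exhibition of the mechanism by which Noetherianity forces $[K\cap L:k]<\infty$. One small observation worth recording: under the standing Noetherian hypothesis and the separability of $K$, conditions (i)--(iii) hold automatically in the second part, so the stated ``equivalence'' with (iv) really amounts to showing that (iv) always holds there --- which is precisely what your descent step establishes.
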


%%%%%%%%%%%%%%%%%%%%%
%%%%%%%%%%%%%%%%%%%%%
\begin{proof}
By \cite[Theorem 3.1]{S2}, $\dim(K\otimes_kL)=0$. Recall at this point that a
zero-dimensional Noetherian ring is regular if and only if it is von Neumann regular.  So a combination of \cite[Theorem 164]{Kap},
\cite[Ex. 22, p. 64]{Kap}, and \cite[Lemma 0]{V} yields (i) $\Leftrightarrow$ (ii) $\Leftrightarrow$ (iii). The last two statements are handled by \cite[Proposition 8]{V}.
\end{proof}

Next, we give an illustrative example for this corollary.

%%%%%%%%%%%%%%%%%%%%%%%%%%%%%%%%%%%%%%%%%%%%%%%%%%%%%%%%%%%%%%%%%%
%%%%%%%%%%%%%%%%%%%%%%%%%%%%%%%%%%%%%%%%%%%%%%%%%%%%%%%%%%%%%%%%%%
\begin{example}\label{Reg:2.5}\rm
Let $(p_{j})_{j\geq1}$ denote the sequence of all prime numbers. Let $$X:=\{i,e^{\frac{2i\pi}{3}}\}\cup\{\sqrt{p_{j}}\mid j\ \mbox{odd}\}\ \mbox{and}\ Y:=\{i\}\cup\{\sqrt{p_{j}}\mid j\ \mbox{even}\}.$$ Clearly, $\Q(X)$ (resp., $\Q(Y)$) is an infinite algebraic separable non-normal (resp., Galois) extension field of $\Q$ and hence by Corollary~\ref{Reg:2.4}
$$\Q(X)\otimes \Q(Y)\cong \Q(i, e^{\frac{2i\pi}{3}}, \sqrt{2}, \sqrt{3}, ...)\times \Q(i, e^{\frac{2i\pi}{3}}, \sqrt{2}, \sqrt{3}, ...)$$ is a non-trivial zero-dimensional regular ring.
\end{example}

Next, we move to the general case, where we discuss the correlation between $A\otimes_kB$ and its fiber rings when subject to regularity. Let $A$ and $B$ be two $k$-algebras. By identifying $A$ and $B$ with their canonical images in $A\otimes_kB$, one can view $A\otimes_kB$ as a free (hence faithfully flat) extension of $A$ and $B$. This very fact lies behind the known transfers of regularity between $A\otimes_kB$ and its fiber rings over the prime ideals of $A$ or $B$. The next result collects these transfer results along with a slight generalization of \cite[Theorem 6(c)]{TY}. We also provide an example, via Theorem~\ref{Reg:2}, for the non-reversibility in general of the implications involved. For this purpose, we first make the following definition.

%%%%%%%%%%%%%%%%%%%%%%%%%%%%%%%%%%%%%%%%%%%%%%%%%%%%%%%%%%%%%%%%%%
%%%%%%%%%%%%%%%%%%%%%%%%%%%%%%%%%%%%%%%%%%%%%%%%%%%%%%%%%%%%%%%%%%
\begin{definition}\rm
A $k$-algebra $R$ is said to be residually separable, if $\kappa_{R}(P)$ is separable over $k$ for each $P\in\Spec(R)$, where $\kappa_{R}(P)$ denotes the residue field of $R_{P}$.
\end{definition}

It is easily seen that a field $k$ is perfect if and only if every $k$-algebra is residually separable. More examples of residually separable $k$-algebras are readily available through localizations of polynomial rings or pullback constructions \cite{BG,BR}. For instance, let $x$ be an indeterminate over $k$ and $K\subseteq L$ two separable extension fields of $k$. Let $$R:=L[x]_{(x)}\ \mbox{and}\ S:=K+xL[x]_{(x)}.$$ Note that the extensions $$k\subseteq K\subseteq L\subseteq L(x)=\qf(R)=\qf(S)$$ are separable by Mac Lane's Criterion and transitivity of separability. So that $R$ and $S$ are residually separable $k$-algebras.

%%%%%%%%%%%%%%%%%%%%%%%%%%%%%%%%%%%%%%%%%%%%%%%%%%%%%%%%%%%%%%%%%%
%%%%%%%%%%%%%%%%%%%%%%%%%%%%%%%%%%%%%%%%%%%%%%%%%%%%%%%%%%%%%%%%%%
\begin{thm}\label{Reg:3}
Let $A$ and $B$ be two $k$-algebras such that $A\otimes_{k}B$ is Noetherian. Consider the following assertions:
\begin{enumerate}
\item $A$, $B$, and $\kappa_{A}(P)\otimes_{k}\kappa_{B}(Q)$ are regular $\forall\ (P,Q)\in\Spec(A)\times\Spec(B)$;

\item $B$ and $A\otimes_{k}\kappa_{B}(Q)$ are regular $\forall\ Q\in\Spec(B)$;

\item $A$ and $\kappa_{A}(P)\otimes_{k}B$ are regular $\forall\ P\in\Spec(A)$;

\item $A\otimes_{k}B$ is regular;

\item $A$ and $B$ are regular.
\end{enumerate}
Then (i) $\Rightarrow$ (ii) (resp., (iii)) $\Rightarrow$ (iv) $\Rightarrow$ (v). If $A$ (or $B$) is residually separable, then all assertions are equivalent.
\end{thm}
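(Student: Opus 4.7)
The plan is to leverage the fact that $A\otimes_kB$ is free, and hence faithfully flat, over both $A$ and $B$, combined with the standard identification of fibres: for any $P\in\Spec(A)$ and $Q\in\Spec(B)$, one has $(A\otimes_kB)\otimes_B\kappa_B(Q)\cong A\otimes_k\kappa_B(Q)$ and $(A\otimes_k\kappa_B(Q))\otimes_A\kappa_A(P)\cong\kappa_A(P)\otimes_k\kappa_B(Q)$, together with the symmetric versions. Ascent and descent of regularity along flat and faithfully flat maps (already exploited in Lemma~\ref{Reg:1}) will produce all the forward implications; the only genuinely new input is the converse under residual separability, which is engineered to fit Lemma~\ref{Reg:1}.

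For the chain (i) $\Rightarrow$ (ii), (iii) $\Rightarrow$ (iv) $\Rightarrow$ (v), I would argue as follows. Fixing $Q\in\Spec(B)$, the flat map $A\to A\otimes_k\kappa_B(Q)$ has fibres $\kappa_A(P)\otimes_k\kappa_B(Q)$, regular by (i); combined with regularity of $A$, the ascent result used in Step~2 of the proof of Lemma~\ref{Reg:1} (namely \cite[Corollary 2]{TY}) yields that $A\otimes_k\kappa_B(Q)$ is regular, giving (ii). The implication (i) $\Rightarrow$ (iii) is identical by symmetry. For (ii) $\Rightarrow$ (iv), the flat map $B\to A\otimes_kB$ has fibres $A\otimes_k\kappa_B(Q)$ which are regular by (ii), and $B$ itself is regular, so the same ascent principle gives regularity of $A\otimes_kB$; the case (iii) $\Rightarrow$ (iv) is analogous. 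Finally, (iv) $\Rightarrow$ (v) follows by descent: because $A\otimes_kB$ is faithfully flat over $A$ (and over $B$) and regular, \cite[Corollary 4]{TY} forces both $A$ and $B$ to be regular.

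The main point is to run this chain in reverse under residual separability, i.e.\ to obtain (v) $\Rightarrow$ (i) when $A$ is residually separable (the case of $B$ being symmetric). With $A$ and $B$ regular, only the regularity of each $\kappa_A(P)\otimes_k\kappa_B(Q)$ requires attention. Such a tensor product is a localization of the quotient $(A/P)\otimes_k(B/Q)$ of $A\otimes_kB$, and is therefore Noetherian. By residual separability, $\kappa_A(P)$ is a separable extension of $k$; I would then verify that separability forces geometric regularity in the sense used here, since for any finite extension $F/k$ the ring $\kappa_A(P)\otimes_kF$ is Artinian (finite-dimensional as $\kappa_A(P)$-vector space) and reduced (by \cite[Chap.~III, \S 15, Theorem~39]{ZS}), hence a finite product of fields, and in particular regular. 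With $\kappa_A(P)$ geometrically regular over $k$ and $\kappa_B(Q)$ a regular $k$-algebra, Lemma~\ref{Reg:1} delivers the regularity of $\kappa_A(P)\otimes_k\kappa_B(Q)$, completing the circle.

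The routine ascent/descent step, already packaged in \cite[Corollaries 2 and 4]{TY}, is what makes (i)–(v) collapse easily in one direction; the anticipated obstacle is the reverse passage, where one must identify the ``geometric regularity'' hypothesis of Lemma~\ref{Reg:1} inside the residual-separability assumption. Once that identification is made, the final implication is essentially immediate, and yields the slight strengthening of \cite[Theorem~6(c)]{TY} promised in the introduction.
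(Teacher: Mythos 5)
Your proposal is correct and follows essentially the same route as the paper: the forward chain (i) $\Rightarrow$ (ii)/(iii) $\Rightarrow$ (iv) $\Rightarrow$ (v) via flat ascent and faithfully flat descent of regularity (the content of \cite[Corollaries 2 and 4]{TY} and the proof of Theorem 6 there), and the collapse under residual separability via Lemma~\ref{Reg:1} applied to the fibres $\kappa_{A}(P)\otimes_{k}\kappa_{B}(Q)$, which are Noetherian as localizations of quotients of $A\otimes_{k}B$. Your explicit verification that a separable field extension is geometrically regular (each $\kappa_{A}(P)\otimes_{k}F$ being Artinian and reduced, hence a finite product of fields) is a detail the paper leaves implicit, but it is exactly the identification needed to invoke Lemma~\ref{Reg:1}.
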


%%%%%%%%%%%%%%%%%%%%%
%%%%%%%%%%%%%%%%%%%%%
\begin{proof}
The first statement is a combination of Corollary 2 and Corollary 4 as well as the proof of Theorem 6 in \cite{TY}.

Next, suppose that $A$ or $B$ is residually separable. Then $\kappa_{A}(P)\otimes_{k}\kappa_{B}(Q)$ is always regular by Lemma~\ref{Reg:1} for any $P\in\Spec(A)$ and $Q\in\Spec(B)$; and, hence, so are $\kappa_{A}(P)\otimes_{k}B$ and $A\otimes_{k}\kappa_{B}(Q)$. Moreover, recall that Noetherianity carries over to   $\kappa_{A}(P)\otimes_{k}\kappa_{B}(Q)$ via localization of the general fact that  if $I$ and $J$ are proper ideals of $A$ and $B$, respectively, then $$\dfrac {A\otimes_kB}{I\otimes_kB+A\otimes_kJ}\cong \dfrac AI \otimes_k\dfrac BJ.$$ Thus, the five assertions in the theorem collapse to: ``$A\otimes_{k}B$ is regular if and only if $A$ and $B$ are regular."
\end{proof}

The above implications are not reversible in general, as shown by the next example. This example shows also that the separable assumption in Lemma~\ref{Reg:1} is sufficient but not necessary and it does not hold, in general, for purely inseparable extensions.

%%%%%%%%%%%%%%%%%%%%%%%%%%%%%%%%%%%%%%%%%%%%%%%%%%%%%%%%%%%%%%%%%%
%%%%%%%%%%%%%%%%%%%%%%%%%%%%%%%%%%%%%%%%%%%%%%%%%%%%%%%%%%%%%%%%%%
\begin{example}\label{Reg:4}\rm
Let $K$ be a purely inseparable extension field of $k$ with $\charac(k)=p\not= 0$ and let $u\in K$ with $p^{e}:=[k(u) : k]$ for some $e\geq 2$. Then $a:=u^{p^{e}}\in k$. Let $x$ be an indeterminate over $k$, $r\in\{1,\cdots, e-1\}$, and $A:=k[x]_{(x^{p^{e-r}}-a)}$. Then:
\begin{enumerate}
\item $A$ is local regular with maximal ideal $\m:=(x^{p^{e-r}}-a)A$.
\item $k(u)\otimes_{k} A$ is regular.
\item $k(u)\otimes_{k} \dfrac{A}{\m}$ is not regular.
\end{enumerate}

Indeed, notice that $(x^{p^{e-r}}-a)$ is a prime ideal of $k[x]$ and, hence, $\m$ is the maximal ideal of $A$, since $\dfrac{A}{\m}\cong \dfrac{k[x]}{(x^{p^{e-r}}-a)}\cong k(u^{p^{r}})$. Moreover, $k(u)\otimes_{k} A\cong S^{-1}k(u)[x]$ is a regular ring, where $S:=k[x]\setminus (x^{p^{e-r}}-a)$. This proves (i) and (ii). However, $k(u)\otimes_{k} \dfrac{A}{\m}\cong k(u)\otimes_{k} k(u^{p^{r}})$ is not regular, by Theorem~\ref{Reg:2}(v), since $k\not=k(u)\cap k(u^{p^{r}})=k(u^{p^{r}})$, proving (iii).
\end{example}

The assumption ``$A$ (or $B$) is residually separable" in Theorem~\ref{Reg:3} is not necessary, as shown by the following example.

%%%%%%%%%%%%%%%%%%%%%%%%%%%%%%%%%%%%%%%%%%%%%%%%%%%%%%%%%%%%%%%%%%
%%%%%%%%%%%%%%%%%%%%%%%%%%%%%%%%%%%%%%%%%%%%%%%%%%%%%%%%%%%%%%%%%%
\begin{example}\label{Reg:5}\rm

Let $k$, $K$, and $L$ be defined as in Example~\ref{Reg:2.1.1} and $x,y$ two indeterminates over $k$. Let
$$\begin{array}{lclcll}
A   &:= & K[x]_{(x)}   &=  & K+\m_A & \mbox{with } \m_A:=xA\\
B   &:= & L[y]_{(y)}   &=  & L+\m_B & \mbox{with } \m_B:=yB
\end{array}$$
Then $A$ and $B$ are regular
local $k$-algebras which are not residually separable over $k$ (since $K$ and $L$ are purely inseparable over $k$ as seen in Example~\ref{Reg:2.1.1}). Moreover, $A\otimes_{k} B$ is Noetherian (in fact, regular via localization) and ${\dfrac{A}{\m_A}\otimes_k\dfrac{B}{\m_B}}\cong K\otimes_kL$ is a regular ring. Consequently, $A$ and $B$ satisfy all assertions of Theorem~\ref{Reg:3}, as desired.
\end{example}

The next example illustrates the slight improvement (of \cite[Theorem 6(c)]{TY}) featured in the last statement of Theorem~\ref{Reg:3}. Namely, we provide original examples where $k$ is an arbitrary field, $A,B$ are regular $k$-algebras with $A\otimes_{k} B$ Noetherian and $A$ is residually separable over $k$.

%%%%%%%%%%%%%%%%%%%%%%%%%%%%%%%%%%%%%%%%%%%%%%%%%%%%%%%%%%%%%%%%%%
%%%%%%%%%%%%%%%%%%%%%%%%%%%%%%%%%%%%%%%%%%%%%%%%%%%%%%%%%%%%%%%%%%
\begin{example}\label{Reg:6}\rm
Let $k$ be an arbitrary field, $K$ any separable extension field of $k$, and $x,y,t$ three indeterminates over $k$. Consider the
$K$-algebra homomorphism $$\varphi:K[x,y]\rightarrow K[[t]]$$ defined by $\varphi (x)=t$ and $\varphi (y)=s:={\sum_{n\geq
1}}t^{n!}$. Since $s$ is known to be transcendental over $K(t)$,
$\varphi$ is injective. This induces the following embedding of fields $$\overline{\varphi}:K(x,y)\rightarrow K((t)).$$ It is
easy to check that $A:=\overline{\varphi}^{-1}(K[[t]])$ is a discrete rank-one valuation overring of
$K[x,y]$ and that $A=K+\m$ with $\m=xA$. Then, $A$ is a residually
separable regular ring. Now, let $B$ be any regular ring such that $A\otimes_kB$ is Noetherian. For instance, one may choose $B$ to be any finitely generated regular
$k$-algebra or any (purely inseparable) finitely generated extension field of $k$. By Theorem~\ref{Reg:3}, $A\otimes_kB$ is a
regular ring.
\end{example}

It is worthwhile noticing that, in most examples, the non-regularity  was ensured by the negation of ``$K_{i}\cap L=k$." One might wonder if this weak property may generate the condition (v) of Theorem~\ref{Reg:2}; namely, let $K$ be a finite dimensional purely inseparable extension field of $k$ and let $L$ be an extension field of $k$. Do we have: $K\cap L=k\Leftrightarrow K\otimes_kL$ regular? The answer is negative as shown by the next example.

%%%%%%%%%%%%%%%%%%%%%%%%%%%%%%%%%%%%%%%%%%%%%%%%%%%%%%%%%%%%%%%%%%
%%%%%%%%%%%%%%%%%%%%%%%%%%%%%%%%%%%%%%%%%%%%%%%%%%%%%%%%%%%%%%%%%%
\begin{example}\label{Reg:7}\rm
Let $x,y,z$ be three indeterminates over ${\dfrac{\Z}{2\Z}}$. Let
$$\begin{array}{lcl}
k   &:= &{\dfrac{\Z}{2\Z}}\Big(x^4,y^4\Big),\\
K   &:= & k(x^2,y^2) = {\dfrac{\Z}{2\Z}}\Big (x^2,y^2\Big),\\
L   &:= &k\big(x^2(y^2+z),z\big) = {\dfrac{\Z}{2\Z}}\Big(x^4,x^2(y^2+z),z\Big).
\end{array}$$
Then $K\cap L=k$ and $K\otimes_kL$ is not a regular ring.

Indeed, clearly, $K$ is a purely inseparable extension field  of $k$.  Further, note that $\{1,x^2\}$ is a basis
of $K$ over $k(y^2)$ and, as $(x^2(y^2+z))^2\in k(z)$,
$\{1,x^2(y^2+z)\}$ is a basis of $L$ over $k(z)$. Let $f\in K\cap L$. So there exist
$g_0,g_1\in k(y^2)$ and $f_0,f_1\in k(z)$ such that
$$\left\{
\begin{array}{lcl}
f   &=  &g_0+g_1x^2\\
    &=  &f_0+f_1x^2(y^2+z).
\end{array}
\right.$$
As $(x^2)^2\in k(y^2,z)$ and $x^2\not\in
k(y^2,z)={\dfrac {\Z}{2\Z}}\Big
(x^4,y^2,z\Big )$, then $\{1,x^2\}$ is, as well, a basis of
$k(x^2,y^2,z)$ over $k(y^2,z)$. It follows that $f_0=g_0$ and
$f_1(y^2+z)=g_1$. Hence, $f_0\in k(z)\cap k(y^2)=k$. Moreover,
observe that $\{1,y^2\}$ is a basis of $k(y^2,z)$ over $k(z)$ and of
$k(y^2)$ over $k$. Hence, as $g_1=f_1z+f_1y^2$ and $g_1\in k(y^2)$,
we get $f_1z\in k$, so that $f_1=0$. Consequently, $f\in k$ and
therefore $K\cap L=k$, as claimed.

Now, $L(x^2)=k(x^2,y^2,z)=K(z)$. Hence $K\cap L(x^2)=K\neq k(x^2)$.
Then, by Theorem~\ref{Reg:2}(v), $K\otimes_kL$ is not regular, as desired.
\end{example}
\end{section}
%\newpage

%%%%%%%%%%%%%%%%%%%%%%%%%%%%%%%%%%%%%%%%%%%%%%%%%%%%%%%%%%%%%%%%%%%%%%%%%%%%%%%%%%%%%%%%%%%%%%%%%%%%%%%%%%%%%%%%%%%%%%%%%%%%%%%%%%%%%%%%%%%%%%
%%%%%%%%%%%%%%%%%%%%%%%%%%%%%%%%%%%%%%%%%%%%%%%%%%%%%%%%%%%%%%%%%%%%%%%%%%%%%%%%%%%%%%%%%%%%%%%%%%%%%%%%%%%%%%%%%%%%%%%%%%%%%%%%%%%%%%%%%%%%%%

%%%%%%%%%%%%%%%%%%%%%%%%%%%%%%%%%%%%%%%%%%%%%%%%%%%%%%%%%%%%%%%%%%%%%%%%%%%%%%%%%%%%%%%%%%%%%%%%%%%%%%%%%%%%%%%%%%%%%%%%%%%%%%%%%%%%%%%%%%%%%%
%%%%%%%%%%%%%%%%%%%%%%%%%%%%%%%%%%%%%%%%%%%%%%%%%%%%%%%%%%%%%%%%%%%%%%%%%%%%%%%%%%%%%%%%%%%%%%%%%%%%%%%%%%%%%%%%%%%%%%%%%%%%%%%%%%%%%%%%%%%%%%
%%%%%%%%%%%%%%%%%%%%%%%%%%%%%%%%%%%%%%%%%%%%%%%%%%%%%%%%%%%%%%%%%%%%%%%%%%%%%%%%%%%%%%%%%%%%%%%%%%%%%%%%%%%%%%%%%%%%%%%%%%%%%%%%%%%%%%%%%%%%%%
%%%%%%%%%%%%%%%%%%%%%%%%%%%%%%%%%%%%%%%%%%%%%%%%%%%%%%%%%%%%%%%%%%%%%%%%%%%%%%%%%%%%%%%%%%%%%%%%%%%%%%%%%%%%%%%%%%%%%%%%%%%%%%%%%%%%%%%%%%%%%%

\end{document}